\documentclass[leqno,12pt]{amsart}
\usepackage[top=30truemm,bottom=30truemm,left=25truemm,right=25truemm]{geometry}
\usepackage{amssymb}
\usepackage{amsmath}
\usepackage{amsthm}
\usepackage{amscd}
\usepackage{mathrsfs}
\usepackage{graphicx}
\usepackage{color}
\usepackage{url}
\usepackage{enumitem}
\makeatletter

\@addtoreset{equation}{section}
\@namedef{subjclassname@2020}{\textup{2020} Mathematics Subject Classification}
\makeatother
\theoremstyle{plain}
\newtheorem{theorem}{\indent\bf Theorem}[section]

\newtheorem{corollary}[theorem]{\indent\bf Corollary}

\theoremstyle{definition}

\newtheorem{example}[theorem]{\indent\bf Example}

\newcommand{\ai}{\sqrt{-1}}
\newcommand{\tr}{\mathrm{tr}}
\newcommand{\R}{\mathbb{R}}
\newcommand{\C}{\mathbb{C}}

\newcommand{\dV}{\mathrm{d}V}

\newcommand{\dx}{\mathrm{d}x}
\newcommand{\Levi}{\mathcal{L}}

\begin{document}
\pagestyle{plain}
\thispagestyle{plain}

\title[Partial positivity is not preserved by marginalization]
{Partial positivity is not preserved by marginalization}

\author[T. INAYAMA]{Takahiro INAYAMA}
\address{Department of Mathematics\\
	College of Science\\
	Rikkyo University\newline
	3-34-1, Nishi-Ikebukuro, Toshima-ku\\
	Tokyo, 171-8501\\
	Japan
}
\email{5073974@rikkyo.ac.jp}
\email{inayama570@gmail.com}
\subjclass[2020]{Primary 32U05; Secondary 32A36, 26B25}
\keywords{
	Pr\'ekopa theorem, marginalization, partial positivity, $q$-positivity%
}

\begin{abstract}
	In this paper, we show that partial positivity is not preserved by marginalization in both the real and complex settings.
	We also give a sufficient condition for the preservation of partial positivity under marginalization.
\end{abstract}


\maketitle
\setcounter{tocdepth}{2}

\section{Introduction}\label{sec:intro}

The classical Pr\'ekopa theorem states that marginalization preserves convexity.
More precisely, if $X\subset\R^r$ is convex and $\psi$ is convex on $X_t\times\R^n_x$, then
\[
	\widetilde{\psi}: t\longmapsto-\log\int_{\R^n}e^{-\psi(t,x)}\,\dx
\]
is convex whenever the integral is finite \cite{Pre73}.

Berndtsson proved a complex analogue \cite{Ber98}.
Let $U\subset\C^r_\tau$ be a domain and let $D\subset\C^n_z$ be a pseudoconvex Reinhardt domain.
If $\varphi$ is plurisubharmonic on $U\times D$ and independent of $\arg(z_j)$ for $j=1,\ldots,n$, then
\[
	\widetilde{\varphi}: \tau\longmapsto-\log\int_D e^{-\varphi(\tau,z)}\,\dV(z)
\]
is plurisubharmonic.

The classical notion of $q$-positivity originates in Andreotti--Grauert theory \cite{AG62}.
A smooth function on a Hermitian manifold of dimension $n$ is $q$-positive if its Levi form has at least $n-q$ positive eigenvalues, and is uniformly $q$-positive if the sum of any $q+1$ Levi eigenvalues, counted with multiplicity, is positive \cite[Definition~2.1]{Yan19}.

In \cite[Problem~3]{Ina21}, the author asked whether an analogous conclusion holds for uniform $(q-1)$-positivity.
Namely, if the sum of any $q$ eigenvalues of $\Levi_{\varphi}$, counted with multiplicity, is positive, does it follow that the sum of any $q$ eigenvalues of $\Levi_{\widetilde\varphi}$ is also positive?
Here for a smooth real-valued function $\eta$ on a domain in
$\C^N$ with standard coordinates $w=(w_1,\ldots,w_N)$, we write
\[
	\Levi_\eta
	:=\left(
	\frac{\partial^2\eta}{\partial w_j\partial\overline w_k}
	\right)_{j,k=1}^N
\]
for its Levi matrix.
This problem has remained open for several years.
In this paper, we construct a counterexample to this question.

\begin{theorem}\label{mainthm:counterexample}
	Fix $r\geq q\geq2$ and $n\geq1$.
	There exists a family of smooth real-valued functions $\varphi_K$ $(K\geq 1)$ on the unit polydisc $\Delta^r\times\Delta^n$ such that $\varphi_K$ is independent of $\arg(z_j)$ for $j=1,\ldots,n$ and the sum of the $q$ smallest eigenvalues of $\Levi_{\varphi_K}$ is greater than $1/5$, whereas the corresponding sum for $\Levi_{\widetilde\varphi_K}(0)$ tends to $-\infty$ as $K\to +\infty$.
\end{theorem}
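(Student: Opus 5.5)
The plan is to produce an explicit example in which the only obstruction is a variance term. Write $E_\tau$ and $\mathrm{Cov}_\tau$ for expectation and covariance with respect to the probability measure $e^{-\varphi(\tau,\cdot)}\,\dV/\int_D e^{-\varphi(\tau,\cdot)}\,\dV$ on $\Delta^n$. Differentiating under the integral gives
\[
\frac{\partial^2\widetilde\varphi}{\partial\tau_j\partial\overline\tau_k}
=E_\tau\Big[\frac{\partial^2\varphi}{\partial\tau_j\partial\overline\tau_k}\Big]
-\mathrm{Cov}_\tau\Big(\frac{\partial\varphi}{\partial\tau_j},\frac{\partial\varphi}{\partial\tau_k}\Big).
\]
For plurisubharmonic $\varphi$, the mixed derivatives $\varphi_{\tau\bar z}$ compensate the covariance term; this is what Berndtsson's argument exploits. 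Under uniform $(q-1)$-positivity, however, the full Levi matrix may have one large negative eigenvalue, and I will use that freedom to make $\varphi_{\tau_1}$ depend strongly on $z$ at linear cost, so that the variance grows quadratically.

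Concretely, I take a smooth radial function $g(|z|^2)$ on $\Delta^n$ that is nonconstant, say $g(s)=s_1$ with $s=(|z_1|^2,\dots,|z_n|^2)$. I then set
\[
\varphi_K(\tau,z)=|\tau_1|^2+2K\,\mathrm{Re}(\tau_1)\,g+A_K|z|^2+M_K\sum_{j=2}^{r}|\tau_j|^2 ,
\]
with constants $A_K,M_K\sim cK$ to be tuned. This function is independent of $\arg z_j$ and smooth on the closed polydisc.

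\textbf{Step 1: the Levi matrix of $\varphi_K$.}
\begin{itemize}
\item The $\tau_1$-entry is $1$.
\item The mixed entries are $\partial^2\varphi_K/\partial\tau_1\partial\overline z_k=K z_k\,\partial g/\partial s_k$, which are $O(K)$.
\item The $z$-block is $A_K I+O(K|\tau_1|)$, which is $A_K I+O(K)$ on $\Delta^r$.
\item The directions $\tau_2,\dots,\tau_r$ have eigenvalue $M_K$.
\end{itemize}
By a Schur-complement or Cauchy interlacing estimate, the combined $(\tau_1,z)$-block has at most one eigenvalue below $A_K-O(K)$, and that eigenvalue is $\geq -CK$ for an explicit $C$. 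Since $q\geq2$ and $r\geq q$, the $q$ smallest eigenvalues include at most this one negative eigenvalue. The other $q-1$ are all $\geq\min(M_K,A_K-O(K))$. Choosing $M_K=A_K=C'K$ with $C'$ large enough that $C'K-O(K)\geq CK+1/5$ then gives a $q$-sum $>1/5$. To get exactly the bound $1/5$ I will fix the constants at the end, for instance by bounding the $2\times2$-type block $\begin{pmatrix}1&m\\\overline m&d\end{pmatrix}$ with $|m|\le K$ explicitly.

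\textbf{Step 2: the marginal at $\tau=0$.} At $\tau=0$ the weight is $e^{-A_K|z|^2}$, so the marginal measure is independent of $K$ except through $A_K$. By the formula above,
\[
\frac{\partial^2\widetilde\varphi_K}{\partial\tau_1\partial\overline\tau_1}(0)=1-K^2\,\mathrm{Var}_0(g).
\]
Meanwhile the $\tau_j$-entries for $j\geq2$ equal $M_K$, and the off-diagonal entries vanish, since $\varphi_{\tau_j}$ is constant in $z$ at $\tau=0$. The sum of the $q$ smallest eigenvalues is therefore at most
\[
1-K^2\,\mathrm{Var}_0(g)+(q-1)C'K .
\]

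\textbf{Main obstacle.} The delicate point is that $\mathrm{Var}_0(g)$ is computed under $e^{-A_K|z|^2}$ with $A_K\sim K$. Since $|z|^2$ lives on a scale comparable to $1/A_K$, the variance of $g=|z_1|^2$ decays like $1/A_K^2\sim 1/K^2$, which would kill the quadratic gain. To avoid this I will decouple the sizes: replace $A_K|z|^2$ by a weight whose $z$-Levi form is large but whose marginal measure at $\tau=0$ stays spread out, e.g.\ $A_K\rho(|z|^2)$ with $\rho$ vanishing to high order, or simply $A_K|z|^2$ with $g$ replaced by $g=e^{A_K|z_1|^2}$-type rescaling. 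Alternatively I will place the coupling in a $z_1$ direction while putting the large Levi mass only where needed. Balancing these scales so that $K^2\mathrm{Var}_0(g)\gg K$ while Step 1 still holds is the crux. If no such balance works directly, my fallback is to let the $\tau_1$ coefficient itself be $K^{3/2}$, with the compensating eigenvalues $\sim K^{3/2}$ and the variance $\sim K^{3}/K^2$, and redo the bookkeeping.
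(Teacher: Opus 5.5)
The variance mechanism is the right one, but the proposal does not yet prove the statement. For the weight you actually write down, Step~2 fails (as you notice yourself), and the ``main obstacle'' is left open.

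\textbf{Why the obstacle is structural.} In your ansatz a coupling $2\kappa\,\mathrm{Re}(\tau_1)|z_1|^2$ puts $2\kappa t_1$ into the $z_1\bar z_1$-entry, and Step~1 makes the fiber coefficient $a$ dominate it uniformly, say $a\geq(2+\epsilon)\kappa$.
\begin{itemize}
\item The law of $\sigma=|z_1|^2$ in the fiber over $\tau$ is $\propto e^{-(a+2\kappa t_1)\sigma}\,\mathrm{d}\sigma$ on $[0,1]$. Its variance is below $(a+2\kappa t_1)^{-2}$, so $\kappa^2\mathrm{Var}_\tau(|z_1|^2)\leq\kappa^2/(a-2\kappa)^2\leq\epsilon^{-2}$. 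In fact $\Gamma_q(\Levi_{\widetilde\varphi_K})\to+\infty$ for your family.
\item This bound is scale invariant, so the $K^{3/2}$ fallback hits the same wall.
\item Conceptually you are in the regime of Theorem~\ref{thm:berndtsson-ky-fan}: $S_{\varphi_K}=\operatorname{diag}\bigl(1-K^2|z_1|^2/(A_K+2Kt_1),M_K,\ldots,M_K\bigr)$ has $\Gamma_q(S_{\varphi_K})>0$ for $C'\geq3$.
\item Your other suggestion, a fiber weight $A_K\rho(|z|^2)$ with $\rho$ flat at $0$, breaks Step~1 instead. At $z=0$ and $t_1<-1/(2K)$ the Levi matrix has eigenvalues $2Kt_1$ and $1$. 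For $q=2$ the two smallest then sum to at most $1+2Kt_1<0$, however large $M_K$ is.
\end{itemize}

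\textbf{The missing idea.} The fiber Levi form should \emph{not} dominate the coupling.
\begin{itemize}
\item The paper keeps the fiber weight at $\tau=0$ of order one, namely $|z_1|^2$ (the directions $z_\ell$, $\ell\geq2$, are decoupled). Then $\varphi_{\tau_1}(0,z)=K|z_1|^2$ has variance $K^2\gamma_{\C}$, with $\gamma_{\C}=(\log f_{\C})''(1)>0$ independent of $K$. Hence $\widetilde\varphi_{\tau_1\bar\tau_1}(0)=-K^2\gamma_{\C}$, which is quadratic against the linear $(q-1)M_K$.
\item The price is that the $(\tau_1,z_1)$-block $A_K$ has an eigenvalue near $-K|z_1|$, and this is affordable. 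One checks $\det(A_K+KI_2)>0$, so $\lambda_->-K$, which is absorbed by the $q-1$ eigenvalues $M_K=K+\frac15$. If instead $\lambda_+\leq M_K$, the $q$-sum is $\tr A_K+(q-2)M_K\geq d_K$.
\item The choice $d_K=1+2Kt_1+4K^4|\tau_1|^4\geq\frac14$, rather than $1+2Kt_1$, is exactly what keeps this trace positive for $t_1<-1/(2K)$. The quartic term vanishes to fourth order at $\tau_1=0$, so it does not change $\widetilde\varphi_{\tau_1\bar\tau_1}(0)$.
\end{itemize}

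Your own ansatz can be repaired along the same lines. Take fiber weight $|z_1|^2$, coefficient $2K+1$ in front of $|\tau_1|^2$, and $3K+\frac15$ on all other directions. This also works, at the cost of a fiber Levi entry $1+2Kt_1$ that is not everywhere positive.
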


Not only does this theorem imply that uniform $(q-1)$-positivity is not preserved by marginalization, but it also shows that the sum of the $q$ smallest eigenvalues of $\Levi_{\widetilde\varphi_K}$ can be arbitrarily negative, even if the sum of the $q$ smallest eigenvalues of $\Levi_{\varphi_K}$ is uniformly positive.

The organization of this paper is as follows.
In Section \ref{sec:complex}, we prove the main theorem; in Section \ref{sec:schur}, we give a sufficient condition for preservation; and in Section \ref{sec:real}, we give a real counterpart.

\subsection*{Acknowledgment}\label{subsec-ack}
The author is supported by Grant-in-Aid for Early-Career Scientists $\sharp$23K12978 from the Japan Society for the Promotion of Science (JSPS).

\section{A complex counterexample}\label{sec:complex}

Before proving Theorem \ref{mainthm:counterexample}, we introduce some notation.
Write $\tau=t+\ai s\in\C^r_{\{\tau_1,\ldots,\tau_r\}}$ and $z=x+\ai y\in\C^n_{\{z_1,\ldots,z_n\}}$, and let $\Delta\subset\C$ be the unit disc.
For a Hermitian matrix $H$ with ordered eigenvalues
$\lambda_1(H)\leq\cdots\leq\lambda_N(H)$, set
\begin{equation}\label{eq:ky-fan}
	\Gamma_q(H):=\lambda_1(H)+\cdots+\lambda_q(H).
\end{equation}
For $K\geq1$, put
\[
	U=\Delta^r,\quad D=\Delta^n,\quad \pi:U\times D\longrightarrow U,\quad \Omega_\tau=\pi^{-1}(\tau)=\{\tau\}\times D,
\]
\[
	M_K=K+\frac15,\qquad
	d_K(\tau_1)=1+2Kt_1+4K^4|\tau_1|^4,
\]
where $\pi$ is the projection defined by $(\tau, z)\mapsto \tau$, and define
\begin{align}\label{eq:complex-weight}
	\varphi_K(\tau,z)
	={} & M_K\sum_{j=2}^{q}|\tau_j|^2
	+2M_K\sum_{j=q+1}^{r}|\tau_j|^2\notag \\
	    & +d_K(\tau_1)|z_1|^2
	+2M_K\sum_{\ell=2}^{n}|z_\ell|^2.
\end{align}
Set
\[
	f_{\C}(a):=\int_0^1e^{-a\sigma}\,\mathrm{d}\sigma,
	\qquad
	\gamma_{\C}:=(\log f_{\C})''(1).
\]

Since $f_{\C}(a)=(1-e^{-a})/a$ for $a>0$, direct computation gives
\begin{equation}\label{eq:gamma-complex}
	\gamma_{\C}=1-\frac{e}{(e-1)^2}
	=1-\frac1{e+e^{-1}-2}.
\end{equation}
On the other hand,
\[
	e+e^{-1}-2=2\left(\frac1{2!}+\frac1{4!}+\cdots\right)>1.
\]
Hence $\gamma_{\C}>0$.

\begin{proof}[Proof of Theorem \ref{mainthm:counterexample}]
	First we show that $d_K \geq 1/4$.
	Put $K|\tau_1|=\eta$.
	Considering that $-\eta \leq Kt_1 \leq \eta$,
	we have
	$d_K = 1 + 2Kt_1+4\eta^4\geq 1-2\eta+4\eta^4$.
	We can easily check that $1-2\eta+4\eta^4 \geq 1/4$.

	The only exceptional block of $\Levi_{\varphi_K}$ is the $(\tau_1,z_1)$ block
	\[
		A_K=
		\begin{pmatrix}
			16K^4|\tau_1|^2|z_1|^2                           &
			(K+8K^4\tau_1\overline\tau_1^{\,2})z_1             \\
			(K+8K^4\overline\tau_1\tau_1^{\,2})\overline z_1 &
			d_K
		\end{pmatrix}.
	\]
	Write its eigenvalues as $\lambda_-\leq\lambda_+$.
	Put $\sigma=|z_1|^2$.
	The determinant of $A_K+KI_2$ is affine in $\sigma$, and its values at $\sigma=0$ and $\sigma=1$ are
	\[
		K(K+d_K)>0
	\]
	and
	\[
		K\left(d_K+16K\eta^2(K+1+Kt_1)\right)>0,
	\]
	respectively.
	Then it follows that $\det(A_K+KI_2)>0$ for $0\leq\sigma\leq1$.
	We also have that $\tr(A_K+KI_2)>0$,
	which means that
	\[
		A_K+KI_2\succ0.
	\]
	Thus $\lambda_->-K$.

	The remaining eigenvalues are $M_K$ with multiplicity $q-1$ and $2M_K$ with multiplicity $r-q+n-1$.
	If $\lambda_+\leq M_K$, then
	\[
		\Gamma_q(\Levi_{\varphi_K})
		=\tr A_K+(q-2)M_K
		\geq d_K>\frac15.
	\]
	If $\lambda_+>M_K$, then
	\[
		\Gamma_q(\Levi_{\varphi_K})
		\geq \min\{\lambda_-,M_K\}+(q-1)M_K
		> -K+(q-1)M_K
		\geq\frac15.
	\]

	Finally, we can see that
	\begin{align*}
		\widetilde\varphi_K(\tau)
		={} & M_K\sum_{j=2}^{q}|\tau_j|^2
		+2M_K\sum_{j=q+1}^{r}|\tau_j|^2               \\
		    & -\log f_{\C}(d_K(\tau_1))+\text{const}.
	\end{align*}
	At the origin,
	\[
		d_K(0)=1,\qquad
		(d_K)_{\tau_1}(0)=K,\qquad
		(d_K)_{\tau_1\overline\tau_1}(0)=0.
	\]
	The chain rule gives
	\begin{align*}
		\frac{\partial^2}{\partial \tau_1\partial\overline \tau_1}
		\bigl[-\log f_{\C}(d_K(\tau_1))\bigr]
		 & =
		-(\log f_{\C})''(d_K)_{\tau_1}(d_K)_{\overline \tau_1}
		-(\log f_{\C})'(d_K)_{\tau_1\overline \tau_1}.
	\end{align*}
	Note that at $\tau=0$, $(\log f_{\C})''(1) = \gamma_{\C}$.
	Therefore the eigenvalues of $\Levi_{\widetilde\varphi_K}(0)$ are
	\[
		-K^2\gamma_{\C},\quad
		\underbrace{M_K,\ldots,M_K}_{q-1},\quad
		\underbrace{2M_K,\ldots,2M_K}_{r-q}.
	\]
	Hence
	\[
		\Gamma_q(\Levi_{\widetilde\varphi_K}(0))
		=(q-1)\left(K+\frac15\right)-K^2\gamma_{\C}
		\longrightarrow-\infty.
	\]
\end{proof}

We present an example with $r=q=2$ and $n=1$ to illustrate the above proof.
\begin{example}\label{ex:complex-two-base}
	Take $r=q=2$ and $n=1$.
	On $\Delta^2\times\Delta$, set
	\[
		\varphi_K(\tau,z)
		=\left(K+\frac15\right)|\tau_2|^2
		+\left(1+2Kt_1+4K^4|\tau_1|^4\right)|z|^2.
	\]
	The sum of the two smallest eigenvalues of $\Levi_{\varphi_K}$ is greater than $1/5$, whereas
	\[
		\Levi_{\widetilde\varphi_K}(0)
		=\begin{pmatrix}
			-K^2\left(1-\dfrac{e}{(e-1)^2}\right) & 0          \\
			0                                     & K+\dfrac15
		\end{pmatrix}.
	\]
	The sum of these two eigenvalues
	\[
		K+\frac15-K^2\left(1-\frac{e}{(e-1)^2}\right)
	\]
	tends to $-\infty$ (and is already negative for $K=100$).
\end{example}

The following example shows that ordinary $q$-positivity is not
preserved by marginalization either.

\begin{example}\label{ex:ordinary-one-positive}
	Let $U=\Delta_\varepsilon^2\subset\C^2$, where $\varepsilon>0$
	is sufficiently small, and let $D=\Delta\subset\C$.
	Set
	\[
		\varphi(\tau,z)
		=|z|^2+(7|z|^2-3)|\tau_1|^2
		+(1-4|z|^4)|\tau_2|^2.
	\]
	This function is smooth and independent of $\arg z$.
	In the coordinates $(\tau_1,\tau_2,z)$, its Levi matrix is
	\[
		\Levi_\varphi(\tau,z)
		=
		\begin{pmatrix}
			7|z|^2-3
			 & 0
			 & 7\overline{\tau_1}z             \\[3pt]
			0
			 & 1-4|z|^4
			 & -8|z|^2\overline{\tau_2}z       \\[3pt]
			7\tau_1\overline z
			 & -8|z|^2\tau_2\overline z
			 & 1+7|\tau_1|^2-16|z|^2|\tau_2|^2
		\end{pmatrix}.
	\]
	In particular,
	\[
		\Levi_\varphi(0,z)
		=\operatorname{diag}(7|z|^2-3,\,1-4|z|^4,\,1).
	\]
	The first diagonal entry is positive for $|z|^2>3/7$,
	while the second is positive for $|z|^2<1/2$.
	Since $3/7<1/2$, this matrix has at least two positive
	eigenvalues for every $z\in\overline\Delta$.
	By continuity and the compactness of $\overline\Delta$,
	we may choose $\varepsilon$ sufficiently small so that
	$\varphi$ is $1$-positive and
	$\varphi_{z\overline z}\geq1/2$ on
	$U\times\overline\Delta$.

	Direct computation shows that
	\[
		\widetilde{\varphi}_{\tau_j\bar{\tau}_k}(\tau) = \frac{\left(\int_D (\varphi_{\tau_j\bar{\tau}_k}-\varphi_{\tau_j}\varphi_{\bar{\tau}_k})e^{-\varphi(\tau,z)} \right)\int_D e^{-\varphi(\tau,z)} + \int_D \varphi_{\tau_j}e^{-\varphi(\tau,z)}\int_D \varphi_{\bar{\tau}_k}e^{-\varphi(\tau,z)}}{\left(\int_D e^{-\varphi(\tau,z)}\right)^2}.
	\]
	Since $\varphi_{\tau_j}(0,z)=0$ for $j=1,2$, we obtain
	\begin{align*}
		\Levi_{\widetilde\varphi}(0)
		 & =
		\frac{\displaystyle\int_D
			\operatorname{diag}(7|z|^2-3,\,1-4|z|^4)
			e^{-|z|^2}\,\mathrm{d}V(z)}
		{\displaystyle\int_D
		e^{-|z|^2}\,\mathrm{d}V(z)}                                           \\
		 & =
		\frac{\displaystyle\int_0^{2\pi}\int_0^1
			\operatorname{diag}(7r^2-3,\,1-4r^4)
			e^{-r^2}\,r\,\mathrm{d}r\,\mathrm{d}\theta}
		{\displaystyle\int_0^{2\pi}\int_0^1
		e^{-r^2}\,r\,\mathrm{d}r\,\mathrm{d}\theta} \qquad (z=re^{\ai\theta}) \\
		 & =
		\frac{\displaystyle\int_0^1
			\operatorname{diag}(7\sigma-3,\,1-4\sigma^2)
			e^{-\sigma}\,\mathrm{d}\sigma}
		{\displaystyle\int_0^1
		e^{-\sigma}\,\mathrm{d}\sigma} \qquad (\sigma=r^2)                    \\
		 & =
		\frac{1}{e-1}
		\begin{pmatrix}
			4e-11 & 0     \\
			0     & 19-7e
		\end{pmatrix}.
	\end{align*}
	Both diagonal entries are negative, since
	$19/7<e<11/4$.
	Thus $\Levi_{\widetilde\varphi}(0)$ is negative definite,
	and $\widetilde\varphi$ is not $1$-positive at the origin.
\end{example}

\section{The horizontal Schur complement}\label{sec:schur}

In this section, we study a sufficient condition for the preservation of partial positivity under marginalization.
Let $U\subset\C^r$ be a domain, $D\Subset\C^n$ be a pseudoconvex Reinhardt domain and $\varphi$ be a smooth real-valued function on $U\times \overline{D}$.
Write the total Levi matrix in base--fiber blocks as
\begin{equation}\label{eq:block-levi}
	\Levi_\varphi=
	\begin{pmatrix}
		A   & B \\
		B^* & C
	\end{pmatrix},
\end{equation}
where $A=(\varphi_{\tau_j\overline\tau_k})$, $B=(\varphi_{\tau_j\overline z_\beta})$ and
$C=(\varphi_{z_\alpha\overline z_\beta})$.
We assume that $C\succ0$ on $U\times\overline D$ and set
\begin{equation}\label{eq:schur}
	S_\varphi:=A-BC^{-1}B^*.
\end{equation}
Following the notation of Berndtsson's paper \cite{Ber09}, we let $A^2_\tau$ be the weighted Bergman space of holomorphic functions on $D$ with the norm
\[
	\left\|h\right\|^2_\tau = \int_D |h|^2 e^{-\varphi(\tau,z)}\,\dV(z).
\]
We consider the vector bundle $F$ over $U$ with fiber $F_\tau = L^2(D, e^{-\varphi(\tau,\cdot)})$ and the vector bundle $E$ over $U$ with fiber $E_\tau=A^2_\tau \subset F_\tau$.
Berndtsson's curvature calculation \cite[(3.1)]{Ber09} shows that for $u_1,\ldots,u_r\in E_\tau$, we have
\[
	\sum_{j,k=1}^{r}
	\left\langle\Theta^E_{j\bar k}u_j,u_k\right\rangle_\tau
	\ge
	\int_D
	\sum_{j,k=1}^{r}
	(S_\varphi)_{jk}(\tau,z)\,
	u_j(z)\overline{u_k(z)}
	e^{-\varphi(\tau,z)}\,dV(z).
\]
Note that Berndtsson assumes that $\varphi$ is plurisubharmonic, but the above inequality holds under the weaker assumption $C\succ0$ since the proof relies only on the fiberwise H\"ormander $L^2$-estimate.
Using this inequality, we can prove the following theorem.

\begin{theorem}\label{thm:berndtsson-ky-fan}
	Let $\varphi$ be independent of $\arg(z_j)$ for $j=1,\ldots,n$, and assume $C\succ0$.
	Then
	\begin{equation}\label{eq:berndtsson-estimate}
		\Levi_{\widetilde\varphi}(\tau)
		\succeq
		\frac{\displaystyle\int_D
		S_\varphi(\tau,z)e^{-\varphi(\tau,z)}\,\dV(z)}
		{\displaystyle\int_D e^{-\varphi(\tau,z)}\,\dV(z)}.
	\end{equation}
	Consequently, if $1\leq q\leq r$ and
	\[
		\Gamma_q(S_\varphi(\tau,z))\geq c
	\]
	for $c\in \R$ on $U\times D$, then
	\[
		\Gamma_q(\Levi_{\widetilde\varphi}(\tau))\geq c
	\]
	on $U$.
\end{theorem}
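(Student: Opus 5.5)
The plan is to apply Berndtsson's curvature inequality to one special section of $E$: the constant function. Since $D$ is a pseudoconvex Reinhardt domain (hence contains the origin in a suitable sense) and $\varphi$ is independent of $\arg(z_j)$, the weighted Bergman space $A^2_\tau$ decomposes orthogonally into monomials $z^\alpha$. The key point is that the constant section $1$ spans a line subbundle $L\subset E$. Moreover, $L$ is orthogonal to its complement for every $\tau$, because
\[
\int_D z^\alpha e^{-\varphi(\tau,z)}\,\dV(z)=0\qquad\text{for }\alpha\neq0
\]
by rotation invariance. The same holds for all $\tau$-derivatives of these integrals. Consequently the second fundamental form of $L$ in $E$ vanishes: $\partial^E_{\tau_j}1$ is the Bergman projection of $-\varphi_{\tau_j}$, which is again rotation invariant and hence a constant multiple of $1$. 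The curvature of $E$ restricted to $L$ therefore equals the curvature of $L$ itself. The metric on $L$ is $\|1\|^2_\tau=\int_D e^{-\varphi}\,\dV=e^{-\widetilde\varphi(\tau)}$, so its curvature is $\partial\bar\partial\widetilde\varphi$.

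Concretely, for a vector $v\in\C^r$ set $u_j=v_j\cdot 1\in E_\tau$. Then
\[
\sum_{j,k}\langle\Theta^E_{j\bar k}u_j,u_k\rangle_\tau=\Bigl(\sum_{j,k}\widetilde\varphi_{\tau_j\bar\tau_k}v_j\bar v_k\Bigr)\int_D e^{-\varphi}\,\dV.
\]
I will verify this directly rather than by subbundle theory. Writing $\Theta^E_{j\bar k}=P\Theta^F_{j\bar k}+(\text{second fundamental form term})$, the second term vanishes for $u=1$ because $\partial_{\tau_j}(e^{-\varphi})$ integrated against $\bar z^\alpha$ vanishes for $\alpha\neq0$. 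Alternatively, one can compute $\partial_{\tau_j}\bar\partial_{\tau_k}\log\|1\|^2$ and use the standard formula for the curvature of a holomorphic line subbundle with vanishing second fundamental form. Plugging into Berndtsson's inequality gives
\[
\langle\Levi_{\widetilde\varphi}(\tau)v,v\rangle\int_D e^{-\varphi}\ge\int_D\langle S_\varphi v,v\rangle e^{-\varphi}\,\dV,
\]
which is \eqref{eq:berndtsson-estimate}.

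For the consequence, I use the Ky Fan variational characterization
\[
\Gamma_q(H)=\min\{\tr(P H P): P \text{ an orthogonal projection of rank } q\}.
\]
This makes $\Gamma_q$ monotone under $\succeq$ and concave: it is a minimum of linear functionals. Monotonicity gives $\Gamma_q(\Levi_{\widetilde\varphi})\ge\Gamma_q(\text{average of }S_\varphi)$. Since the right-hand side of \eqref{eq:berndtsson-estimate} is a probability average of $S_\varphi(\tau,z)$ with weight $e^{-\varphi}/\int e^{-\varphi}$, concavity (Jensen, or directly: for a fixed $P$ the trace is linear, then take the minimum over $P$) yields
\[
\Gamma_q(\text{average})\ge\text{average of }\Gamma_q(S_\varphi)\ge c.
\]

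The main obstacle is the first step: justifying rigorously that the curvature of $E$ applied to the constant section reproduces exactly $\partial\bar\partial\widetilde\varphi$ times $\|1\|^2$. This requires that the Chern connection of $E$ preserve the rotation-invariant line and that the needed differentiations under the integral be legitimate. I would handle this by checking, via rotation invariance, that $\partial^{E}_{\tau_j}1=P(-\varphi_{\tau_j})$ is constant in $z$. The equality then follows from the elementary formula for the curvature of the metric $e^{-\widetilde\varphi}$ on a trivial line bundle. Smoothness of $\varphi$ on $U\times\overline D$ with $D\Subset\C^n$ justifies all differentiation under the integral.
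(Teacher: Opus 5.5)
Your proposal is correct and is essentially the paper's proof. The constant section spans a line subbundle $L\subset E$ whose second fundamental form in $E$ vanishes, because $P_\tau(\varphi_{\tau_j})$ is torus-invariant and holomorphic, hence constant, so $\Theta^E$ on $L$ is $\partial\bar\partial\widetilde\varphi$; Berndtsson's inequality and the Ky Fan characterization (monotonicity and concavity of $\Gamma_q$) then finish the argument.

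One caution: your ``direct'' variant via $\Theta^E_{j\bar k}=P\Theta^F_{j\bar k}+(\cdots)$ is wrong as stated. The second fundamental form of $E$ in $F$ at $1$ is $-(I-P_\tau)\varphi_{\tau_j}$, i.e.\ minus $\varphi_{\tau_j}$ with its weighted mean subtracted, and this does not vanish; it produces the covariance term in $\widetilde\varphi_{\tau_j\bar\tau_k}$. Only the second fundamental form of $L$ in $E$ vanishes. Also, $D$ need not contain the origin (e.g.\ an annulus), but this is harmless, since torus-invariant holomorphic functions on a connected Reinhardt domain are constant.
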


This theorem shows that if we assume that the Schur complement $S_\varphi$ is uniformly $(q-1)$-positive, then $\widetilde\varphi$ is also uniformly $(q-1)$-positive.

\begin{proof}[Proof of Theorem \ref{thm:berndtsson-ky-fan}]
	From \cite{Ber09}, we can see that $D^E = P_\tau D^F$ and $D^F_{\tau_j} = \partial_{\tau_j}-\varphi_{\tau_j}$, where $P_\tau:F_\tau\to E_\tau$ is the orthogonal projection.
	Hence, it holds that
	\[
		D^E 1 = - \sum_j P_\tau(\varphi_{\tau_j})d\tau_j.
	\]
	Let $\theta = (\theta_1,\ldots,\theta_n)\in \mathbf{T}^n$ and define $T_\theta:F_\tau\to F_\tau$ by
	\[
		(T_\theta f)(z)=f(R_\theta z),
	\]
	where $R_\theta z = (e^{\ai \theta_1}z_1,\ldots,e^{\ai \theta_n}z_n)$.
	This operator preserves $E_\tau$.
	It also preserves $E_\tau^\perp$ since if $f\in E^\perp_\tau$ and $g\in E_\tau$, then
	\[
		\langle T_\theta f, g\rangle_\tau = \langle f, T_{-\theta} g\rangle_\tau = 0.
	\]
	Apply $T_\theta$ to the unique decomposition $f=P_\tau f + (I-P_\tau)f$ for $f\in F_\tau$.
	The first transformed term is in $E_\tau$ and the second transformed term is in $E_\tau^\perp$.
	Uniqueness of the decomposition implies that $T_\theta P_\tau = P_\tau T_\theta$.
	Combining the above facts, we have that $P_\tau(\varphi_{\tau_j})$ is a torus-invariant holomorphic function on $D$, and hence is constant in $z$ (note that $\varphi_{\tau_j}$ is also torus-invariant).
	Therefore $D^E 1$ takes values in $L_\tau:=\C 1$.
	By the Leibniz rule, the same holds for $D^E s$ for every smooth section $s$ of $L$, which implies that the second fundamental form of $L$ vanishes.
	Hence, if we take $u_j=\xi_j \cdot 1$ for $\xi=(\xi_1,\ldots,\xi_r) \in \C^r$, we obtain
	\[
		\sum_{j,k=1}^{r}
		\left\langle\Theta^E_{j\bar k}\xi_j \cdot 1,\xi_k \cdot 1\right\rangle_\tau
		=
		\Theta^L(\xi, \xi) \left\| 1\right\|^2_\tau
		\ge
		\int_D
		S_\varphi(\tau,z)(\xi, \xi)
		e^{-\varphi(\tau,z)}\,dV(z),
	\]
	equivalently,
	\[
		\Levi_{\widetilde\varphi}(\tau)(\xi,\xi) \geq \frac{\displaystyle\int_D S_\varphi(\tau,z)(\xi, \xi)e^{-\varphi(\tau,z)}\,dV(z)}{\displaystyle\int_D e^{-\varphi(\tau,z)}\,dV(z)}.
	\]

	The Ky Fan minimum principle \cite{Fan49} implies that
	\[
		\Gamma_q(H)
		=\min_{\substack{P=P^*=P^2\\ \operatorname{rank}P=q}}\tr(PH).
	\]
	This gives the following two properties.
	First, $\Gamma_q$ is concave: for $0\leq \theta\leq 1$,
	\begin{align*}
		\Gamma_q(\theta H+(1-\theta)K)
		 & =\min_P\bigl(\theta\tr(PH)+(1-\theta)\tr(PK)\bigr) \\
		 & \ge\theta\Gamma_q(H)+(1-\theta)\Gamma_q(K).
	\end{align*}
	Second, let $R\succeq0$ and $P$ be an orthogonal projection.
	Then $\tr(PRP) = \tr(PR)$ and $\tr(PRP)\geq 0$.
	Therefore,
	\begin{align*}
		\Gamma_q(H+R)
		 & =\min_P\bigl(\tr(P(H+R))\bigr)       \\
		 & \geq \min_P \tr(PH) + \min_P \tr(PR) \\
		 & \geq \Gamma_q(H).
	\end{align*}
	Let
	\[
		\overline{S}(\tau) = \frac{\displaystyle\int_D
		S_\varphi(\tau,z)e^{-\varphi(\tau,z)}\,\dV(z)}
		{\displaystyle\int_D e^{-\varphi(\tau,z)}\,\dV(z)}.
	\]
	From what we have discussed, we obtain
	\begin{align*}
		\Gamma_q(\Levi_{\widetilde\varphi}(\tau))
		 & \ge \Gamma_q(\overline S(\tau))                 \\
		 & \ge \frac{\displaystyle\int_D
		\Gamma_q(S_\varphi(\tau,z))e^{-\varphi(\tau,z)}\,\dV(z)}
		{\displaystyle\int_D e^{-\varphi(\tau,z)}\,\dV(z)} \\
		 & \ge c.\qedhere
	\end{align*}
\end{proof}

Let us show an example to which Theorem \ref{thm:berndtsson-ky-fan} applies.

\begin{example}\label{ex:mixed-schur}
	Let $U=\Delta^2$, $D=\Delta$ and $q=2$, and set
	\[
		\varphi(\tau,z)
		=-3|\tau_1|^2+4|\tau_2|^2+(1+|\tau_1|^2)|z|^2.
	\]
	We can see that
	\[
		\Levi_\varphi(\tau,z)=
		\begin{pmatrix}
			-3+|z|^2          & 0 & \overline\tau_1z \\
			0                 & 4 & 0                \\
			\tau_1\overline z & 0 & 1+|\tau_1|^2
		\end{pmatrix}.
	\]
	Applying the Ky Fan minimum principle to the $(\tau_1,z)$ coordinate plane, we obtain
	\begin{equation*}
		\Gamma_2(\Levi_\varphi(\tau,z)) \leq -2+|\tau_1|^2+|z|^2<0
	\end{equation*}
	for every $(\tau,z)\in U\times D$.
	However, we have
	\[
		C=1+|\tau_1|^2,\qquad
		B=\begin{pmatrix}\overline\tau_1z\\0\end{pmatrix},\qquad
		S_\varphi=
		\begin{pmatrix}
			-3+\dfrac{|z|^2}{1+|\tau_1|^2} & 0 \\
			0                              & 4
		\end{pmatrix},
	\]
	and
	\[
		\Gamma_2(S_\varphi)=1+\frac{|z|^2}{1+|\tau_1|^2}\geq1,
	\]
	for every $(\tau,z)\in U\times D$.
	Thus Theorem \ref{thm:berndtsson-ky-fan} gives $\Gamma_2(\Levi_{\widetilde\varphi}(\tau))\geq1$ for every $\tau\in U$.
\end{example}

As Theorem \ref{mainthm:counterexample} shows, the assumption $\Gamma_q(\Levi_\varphi)\geq c$ does not imply $\Gamma_q(\Levi_{\widetilde\varphi})\geq c$.
However, if we impose additional assumptions, we can obtain the positivity of $\Gamma_q(\Levi_{\widetilde\varphi})$ from that of $\Gamma_q(\Levi_\varphi)$.

\begin{corollary}\label{cor:mixed-block}
	In the setting of Theorem \ref{thm:berndtsson-ky-fan}, let $1\leq q\leq r$, $c\in\R$, $\delta>0$ and $M\geq0$.
	Assume on $U\times D$ that
	\[
		\Gamma_q(\Levi_\varphi)\geq c,\qquad
		C\succeq\delta I_n,\qquad
		\lVert B\rVert_{\mathrm{op}}\leq M.
	\]
	Then, on $U$,
	\[
		\Gamma_q(\Levi_{\widetilde\varphi})
		\geq c-\frac{qM^2}{\delta}.
	\]
\end{corollary}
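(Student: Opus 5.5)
The plan is to reduce the corollary to Theorem \ref{thm:berndtsson-ky-fan}: it suffices to show that, pointwise on $U\times D$,
\[
	\Gamma_q(S_\varphi)\geq c-\frac{qM^2}{\delta}.
\]
The theorem then gives the same lower bound for $\Gamma_q(\Levi_{\widetilde\varphi})$ on $U$, since the hypothesis $C\succeq\delta I_n$ in particular gives $C\succ0$.

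To bound $\Gamma_q(S_\varphi)$, we write $S_\varphi=A-BC^{-1}B^*$ and treat $A$ and the correction term separately. First, $\Gamma_q(A)\geq\Gamma_q(\Levi_\varphi)\geq c$. This follows from the Ky Fan minimum principle: any rank-$q$ orthogonal projection $P$ on $\C^r$ extends by zero to a rank-$q$ orthogonal projection $\widehat P$ on $\C^{r+n}$, and $\tr(\widehat P\Levi_\varphi)=\tr(PA)$. Minimizing over $P$ therefore gives $\Gamma_q(A)\geq\Gamma_q(\Levi_\varphi)$; this is just Cauchy interlacing for principal submatrices.

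Second, we control the correction $R:=BC^{-1}B^*\succeq0$. Since $C\succeq\delta I_n$, we have $C^{-1}\preceq\delta^{-1}I_n$, hence $R\preceq\delta^{-1}BB^*$ and $\|R\|_{\mathrm{op}}\leq M^2/\delta$. For any rank-$q$ projection $P$,
\[
	\tr(PR)\leq q\|R\|_{\mathrm{op}}\leq \frac{qM^2}{\delta}.
\]
Consequently
\[
	\tr(PS_\varphi)=\tr(PA)-\tr(PR)\geq \tr(PA)-\frac{qM^2}{\delta}.
\]
Taking the minimum over $P$ and using the Ky Fan principle again yields $\Gamma_q(S_\varphi)\geq\Gamma_q(A)-qM^2/\delta\geq c-qM^2/\delta$.

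No step looks genuinely hard. The only point needing care is the direction of the inequality when passing from the full Levi matrix to the $A$-block: restricting to projections supported in the base directions can only increase the minimum, which is exactly the direction we need. The rest is routine operator-norm bookkeeping, and then we invoke Theorem \ref{thm:berndtsson-ky-fan}.
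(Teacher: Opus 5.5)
Your proposal is correct and follows essentially the same route as the paper. Both arguments get $\Gamma_q(A)\geq\Gamma_q(\Levi_\varphi)\geq c$ from interlacing (your zero-extension of Ky Fan projections is the same fact), bound $BC^{-1}B^*\preceq (M^2/\delta)I_r$ to obtain $\Gamma_q(S_\varphi)\geq c-qM^2/\delta$, and then invoke Theorem \ref{thm:berndtsson-ky-fan}; the only cosmetic difference is that you bound $\tr(PR)$ projection by projection, whereas the paper writes $S_\varphi\succeq A-(M^2/\delta)I_r$ and shifts $\Gamma_q$.
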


For example, if $c>0$ and $qM^2<c\delta$, then $\Gamma_q(\Levi_{\widetilde\varphi})$ is positive.

\begin{proof}
	Let $\lambda_1\leq\cdots\leq\lambda_{r+n}$ be the eigenvalues of
	$\Levi_\varphi$, and let $\mu_1\leq\cdots\leq\mu_r$ be those of $A$.
	Since $A$ is a principal submatrix of $\Levi_\varphi$, the eigenvalue
	interlacing theorem \cite[Theorem~4.3.28]{HJ13} gives $\lambda_j\leq\mu_j$
	for $1\leq j\leq r$. Hence
	\[
		\Gamma_q(A)=\sum_{j=1}^q\mu_j
		\geq\sum_{j=1}^q\lambda_j
		=\Gamma_q(\Levi_\varphi)\geq c.
	\]
	Moreover, we have from the assumptions that
	\[
		0\prec C^{-1}\preceq\frac1\delta I_n,
	\]
	and for $\xi\in \C^r$
	\[
		\langle (BC^{-1}B^*)\xi, \xi\rangle = \langle C^{-1}B^*\xi, B^*\xi\rangle \leq \frac{1}{\delta}\left\| B^*\xi\right\|^2 \leq \frac{M^2}{\delta}\|\xi\|^2,
	\]
	which implies that $S_\varphi=A-BC^{-1}B^* \succeq A - \frac{M^2}{\delta}I_r$.
	Thus it follows that
	\[
		\Gamma_q(S_\varphi) \geq \Gamma_q\left(A-\frac{M^2}{\delta}I_r \right)=\Gamma_q(A)-\frac{qM^2}{\delta}\geq c-\frac{qM^2}{\delta}.
	\]
	By Theorem \ref{thm:berndtsson-ky-fan}, we obtain the desired estimate.
\end{proof}

\begin{example}\label{ex:large-fiber-small-mixed}
	Let $U=\Delta^2$, $D=\Delta$ and $q=2$, and set
	\[
		\varphi(\tau,z)
		=-|\tau_1|^2+2|\tau_2|^2
		+\left(10+\frac{|\tau_1|^2}{10}\right)|z|^2.
	\]
	We have
	\begin{align*}
		\Levi_\varphi
		 & =\begin{pmatrix}
			    -1+\dfrac{|z|^2}{10}          & 0 & \dfrac{\overline\tau_1z}{10} \\
			    0                             & 2 & 0                            \\
			    \dfrac{\tau_1\overline z}{10} & 0 & 10+\dfrac{|\tau_1|^2}{10}
		    \end{pmatrix} \\
		 & =\operatorname{diag}(-1,2,10)
		+\frac1{10}
		\begin{pmatrix}z\\0\\\tau_1\end{pmatrix}
		\begin{pmatrix}\overline z&0&\overline\tau_1\end{pmatrix}
		\succeq\operatorname{diag}(-1,2,10).
	\end{align*}
	Consequently, $\Gamma_2(\Levi_\varphi)\geq-1+2=1$.
	Moreover,
	\[
		C=10+\frac{|\tau_1|^2}{10}\geq10,\qquad
		B=\begin{pmatrix}\overline\tau_1z/10\\0\end{pmatrix},\qquad
		\|B\|_{\mathrm{op}}=\frac{|\tau_1||z|}{10}\leq\frac1{10}.
	\]
	Taking $c=1$, $\delta=10$ and $M=1/10$, Corollary \ref{cor:mixed-block} gives
	\[
		\Gamma_2(\Levi_{\widetilde\varphi})
		\geq1-\frac{2(1/10)^2}{10}
		=\frac{499}{500}>0
	\]
	on $U$.
\end{example}

\section{A real counterpart}\label{sec:real}

In this section, we show a real counterpart of Theorem \ref{mainthm:counterexample}.
Before proving the result, we introduce some notation and preliminary results.
For a real symmetric matrix, we use the same notation $\Gamma_q$ as in \eqref{eq:ky-fan}.
For a smooth function $\psi$ on $I\times X\subset\R^r_{\{t_1,\ldots,t_r\}}\times\R^n_{\{x_1,\ldots,x_n\}}$, set
\[
	\widetilde\psi(t):=-\log\int_Xe^{-\psi(t,x)}\,\dx.
\]
Define
\[
	f_{\R}(a):=\int_{-1}^1e^{-au^2/2}\,\mathrm{d}u,
	\qquad
	\gamma_{\R}:=(\log f_{\R})''(1).
\]
Integration by parts gives
\[
	af_{\R}'(a)=e^{-a/2}-\frac12f_{\R}(a).
\]
Differentiating this identity and evaluating at $a=1$, we obtain
\begin{equation}\label{eq:real-log-convexity}
	\gamma_{\R}=\frac12-\frac{e^{-1/2}}{f_{\R}(1)}
	-\frac{e^{-1}}{f_{\R}(1)^2}.
\end{equation}
On the other hand, $e^v>1+v+v^2/2$ for $v>0$ gives
\begin{align*}
	f_{\R}(1)
	 & =2e^{-1/2}\int_0^1e^{(1-u^2)/2}\,\mathrm{d}u                                      \\
	 & >2e^{-1/2}\int_0^1\left(1+\frac{1-u^2}{2}+\frac{(1-u^2)^2}{8}\right)\,\mathrm{d}u \\
	 & =2e^{-1/2}\left(1+\frac13+\frac1{15}\right)
	=\frac{14}{5}e^{-1/2}.
\end{align*}
Hence $\gamma_{\R}>\frac12-\frac5{14}-\frac{25}{196}=\frac3{196}>0$.

Choose a smooth function $\rho:\R\to\R$ such that
\begin{equation}\label{eq:rho-properties}
	\rho\geq\frac34,\qquad 0\leq\rho'\leq1,\qquad\rho''\geq0,
\end{equation}
and $\rho(u)=1+u$ in a neighborhood of $0$.

\begin{theorem}\label{thm:real-counterexample}
	Fix $r\geq q\geq2$ and $n\geq1$.
	Put
	\[
		I=(-1,1)^r,\qquad X=(-1,1)^n,
	\]
	and, for $K\geq1$, set
	\[
		M_K=K+\frac15,\qquad
		d_K(t_1)=\rho(Kt_1).
	\]
	Define
	\begin{align}\label{eq:real-weight}
		\psi_K(t,x)
		={} & \frac{M_K}{2}\sum_{j=2}^{q}t_j^2
		+M_K\sum_{j=q+1}^{r}t_j^2\notag        \\
		    & +\frac12d_K(t_1)x_1^2
		+M_K\sum_{\ell=2}^{n}x_\ell^2.
	\end{align}
	Then
	\[
		\Gamma_q(D^2\psi_K)>\frac15
	\]
	on $I\times X$.
	Nevertheless,
	\begin{equation}\label{eq:real-defect}
		\Gamma_q(D^2\widetilde\psi_K(0))
		=(q-1)\left(K+\frac15\right)-K^2\gamma_{\R}
		\longrightarrow-\infty.
	\end{equation}
\end{theorem}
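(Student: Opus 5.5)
The proof will follow the proof of Theorem~\ref{mainthm:counterexample}, with one simplification: the fiber weight now depends on $t_1$ only through a function that is convex along the base. The plan has two parts: a pointwise Hessian bound for $\psi_K$, and an explicit computation of $\widetilde\psi_K$ near $0$.

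\emph{Hessian of $\psi_K$.} The Hessian $D^2\psi_K$ is block diagonal. There is the eigenvalue $M_K$ with multiplicity $q-1$ (from $t_2,\dots,t_q$), and the eigenvalue $2M_K$ with multiplicity $r-q+n-1$ (from $t_{q+1},\dots,t_r,x_2,\dots,x_n$). The only nontrivial piece is the $(t_1,x_1)$ block
\[
A_K=\begin{pmatrix}\tfrac12K^2\rho''(Kt_1)x_1^2 & K\rho'(Kt_1)x_1\\ K\rho'(Kt_1)x_1 & \rho(Kt_1)\end{pmatrix}.
\]
Let $\lambda_-\le\lambda_+$ be its eigenvalues. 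I will show $\lambda_->-K$ by proving $A_K+KI_2\succ0$. The trace is clearly positive because $\rho''\ge0$ and $\rho>0$. For the determinant,
\[
\det(A_K+KI_2)\ge K\bigl(K+\rho\bigr)-K^2\rho'^2x_1^2 = K\rho + K^2(1-\rho'^2x_1^2)>0,
\]
using $\rho''\ge0$, $0\le\rho'\le1$, $|x_1|<1$ and $\rho\ge 3/4$. The same two cases as before then apply. If $\lambda_+\le M_K$, then $\Gamma_q=\tr A_K+(q-2)M_K\ge\rho\ge3/4>1/5$, since the diagonal entry $\frac12K^2\rho''x_1^2$ is nonnegative. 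If $\lambda_+>M_K$, then $\Gamma_q\ge\min\{\lambda_-,M_K\}+(q-1)M_K>-K+(q-1)(K+1/5)\ge 1/5$, because $q\ge2$. In both cases $\Gamma_q(D^2\psi_K)>1/5$.

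\emph{Marginal at $0$.} Fubini separates the integral over $X$. The integrals over $x_2,\dots,x_n$ are constants, so
\[
\widetilde\psi_K(t)=\frac{M_K}{2}\sum_{j=2}^q t_j^2+M_K\sum_{j=q+1}^r t_j^2-\log f_{\R}\bigl(d_K(t_1)\bigr)+\text{const}.
\]
Therefore $D^2\widetilde\psi_K(0)$ is diagonal, with entries $M_K$ ($q-1$ times), $2M_K$ ($r-q$ times), and the $t_1t_1$ entry. Near $0$ we have $d_K(t_1)=1+Kt_1$, so $d_K(0)=1$, $d_K'(0)=K$ and $d_K''(0)=0$. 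The chain rule gives the $t_1t_1$ entry as
\[
-(\log f_{\R})''(1)\,K^2-(\log f_{\R})'(1)\cdot 0=-K^2\gamma_{\R}.
\]
Since $\gamma_{\R}>0$, we have $-K^2\gamma_{\R}<M_K$, so the $q$ smallest eigenvalues are $-K^2\gamma_{\R}$ together with $q-1$ copies of $M_K$. Their sum is $(q-1)(K+1/5)-K^2\gamma_{\R}$, which tends to $-\infty$ because the $K^2$ term dominates.

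\emph{Main point to check.} The only delicate step is the determinant bound. Dropping the $\rho''$ term needs care, since it is a product of nonnegative quantities. Positivity of $K\rho+K^2(1-\rho'^2x_1^2)$ then follows from $\rho'\le1$ and $|x_1|<1$, and this is exactly where the properties \eqref{eq:rho-properties} are used.
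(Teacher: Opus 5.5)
Your proposal is correct and follows the paper's proof essentially step for step. It uses the same $(t_1,x_1)$ block $A_K$, the same bound $\det(A_K+KI_2)\ge K\rho+K^2(1-\rho'^2x_1^2)>0$ to get $\lambda_->-K$, the same two-case argument, and the same Fubini reduction to $-\log f_{\R}(d_K(t_1))$ followed by the chain rule at $0$. You also state explicitly that $-K^2\gamma_{\R}<M_K$, a point the paper leaves implicit when it reads off $\Gamma_q$.
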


\begin{proof}[Proof of Theorem \ref{thm:real-counterexample}]
	Put $\eta=Kt_1$.
	The exceptional $(t_1,x_1)$ block of $D^2\psi_K$ is
	\[
		A_K=
		\begin{pmatrix}
			\frac12K^2\rho''(\eta)x_1^2 & K\rho'(\eta)x_1 \\
			K\rho'(\eta)x_1             & \rho(\eta)
		\end{pmatrix}.
	\]
	Write its eigenvalues as $\lambda_-\leq\lambda_+$.
	Since $0\leq\rho'\leq1$ and $\rho''\geq0$,
	\begin{align*}
		\det(A_K+KI_2)
		 & =\left(K+\frac12K^2\rho''(\eta)x_1^2\right)(K+\rho(\eta))
		-K^2\rho'(\eta)^2x_1^2                                       \\
		 & \geq K^2 + K\rho(\eta) - K^2x_1^2>0.
	\end{align*}
	It is also true that $\tr(A_K + KI_2)>0$, which means that $A_K + KI_2\succ0$.
	Thus $\lambda_->-K$, while $\tr A_K\geq\rho(\eta)\geq3/4$.
	The remaining Hessian eigenvalues are $M_K$ with multiplicity $q-1$ and $2M_K$ with multiplicity $r-q+n-1$.
	The same two-case argument as in the proof of Theorem \ref{mainthm:counterexample} gives
	\[
		\Gamma_q(D^2\psi_K)>\frac15.
	\]

	Integration in the fiber gives
	\[
		\widetilde\psi_K(t)
		=\frac{M_K}{2}\sum_{j=2}^q t_j^2
		+M_K\sum_{j=q+1}^r t_j^2
		-\log f_{\R}(d_K(t_1))+\text{const}.
	\]
	Since $\rho(u)=1+u$ near $0$, we have
	$d_K(0)=1$, $d_K'(0)=K$ and $d_K''(0)=0$.
	The same argument as in the proof of Theorem \ref{mainthm:counterexample} gives the Hessian eigenvalues
	\[
		-K^2\gamma_{\R},\quad
		\underbrace{M_K,\ldots,M_K}_{q-1},\quad
		\underbrace{2M_K,\ldots,2M_K}_{r-q}
	\]
	at the origin.
	This proves \eqref{eq:real-defect}.
\end{proof}

\subsection*{Use of AI}
This work was developed through iterative mathematical discussions between the author and OpenAI's Codex (GPT-5.6 Sol and GPT-6 Astra).
The author contributed ideas and mathematical arguments to this process.
The explicit counterexample constructions in Sections 2 and 4 were proposed by the AI rather than devised by the author.
The author independently verified all mathematical statements and proofs and takes full responsibility for the entire content of the paper.


\end{document}